\newtheorem{theorem}{Theorem}[section]
\newtheorem{lemma}[theorem]{Lemma}
\newtheorem{proposition}[theorem]{Proposition}
\newtheorem{definition}[theorem]{Definition}
\newtheorem{remark}[theorem]{Remark}
\newtheorem{example}[theorem]{Example}
\begin{document}

\begin{frontmatter}

\title{On Quasi-Modular Pseudometric Spaces and Asymmetric Uniformities}

\author{Philani Rodney Majozi}
\ead{Philani.Majozi@nwu.ac.za}
\address{Department of Mathematics, Pure and Applied Analytics, North-West University, Mahikeng, South Africa}

\begin{abstract}
We study quasi-modular pseudometric spaces as asymmetric refinements of modular
metric structures. To each such space we associate canonical forward and backward
quasi-uniformities and the corresponding directional topologies.

We introduce directional notions of convergence, completeness, total boundedness,
and compactness, and show that these properties are not preserved under
symmetrization. In particular, forward and backward completeness may differ, and
compactness of the symmetrized uniformity does not imply directional compactness.

Using enriched category theory as a comparison framework, we show that
symmetrization yields a symmetric enriched category whose Cauchy completion
coincides with the classical uniform completion, while directional notions
remain invisible at this level.
\end{abstract}

\begin{keyword}
Quasi-modular pseudometric \sep asymmetric uniformity \sep quasi-uniform space \sep
Lawvere-enriched category \sep modular topology \sep completeness \sep compactness
\\[2pt]
MSC: Primary   54E15\sep 54E35\sep Secondary 18D20\sep 54E55\sep 46E30
\end{keyword}

\end{frontmatter}

\section{Introduction}\label{sec:intro}

Modular metric spaces, introduced by Chistyakov
\cite{Chistyakov2010,Chistyakov2015}, provide a flexible setting interpolating
between classical metric geometry and modular methods arising in functional
analysis. In this setting, displacement between two points $x,y$ is measured not
by a single distance, but by a scale-dependent family
$w_\lambda(x,y)$ indexed by $\lambda>0$. This scale parameter encodes resolution
or tolerance and allows one to recover, as particular instances, ordinary
metrics as well as Luxemburg-type distances associated with Orlicz and
Musielak-Orlicz spaces. The induced \emph{modular topology} $\tau(w)$, its
metrizability in convex cases, and its relationship with completeness have been
studied in detail in recent work by Mushaandja and Olela-Otafudu
\cite{MushaandjaOlela2025}.

The systematic study of \emph{asymmetric} modular structures predates the present
work. In particular, quasi-modular (or modular quasi-metric) spaces were
introduced and investigated by K.~Sebogodi in his Master's and Ph.D.\ studies under
the supervision of O.~Olela-Otafudu \cite{Sebogodi2019}, where several basic
topological properties and convergence notions were already identified. More
recently, López-Pastor, Pedraza, and Rodríguez-López
\cite{LopezPastor2025} showed that quasi-pseudometric modular spaces can be
naturally viewed as categories enriched over suitable quantales, thereby placing
the theory within the general setting of Lawvere-enriched category theory.

The present paper does not seek to reintroduce these structures nor to replicate
their foundational properties. Instead, it focuses on the genuinely
\emph{directional} notion that arise once symmetry is abandoned, and that are
not captured by either symmetric modular theory or by purely categorical
representations. Our guiding principle is that asymmetry encodes intrinsic
directionality: the ``effort'' to move from $x$ to $y$ may differ from the effort
to return. Such behavior is inherent in directed graphs, transport problems with
drift, causal or temporal precedence relations, irreversible dynamics, and
cost-to-go functions in control theory. From a topological point of view, these
features are naturally modeled by quasi-metrics and quasi-uniformities
\cite{Kunzi2001,Matthews1992,PicadoPultr2011}, whose associated bitopological
structures were systematically developed by Fletcher and Lindgren
\cite{Fletcher2014}.

In this work we develop a theory of \emph{quasi-modular pseudometric spaces} in
which the modular triangle inequality and scale regularity are retained, while
symmetry is dropped. Each such space canonically generates forward and backward
quasi-uniformities, together with corresponding left and right topologies.
Although these structures admit a natural symmetrization, yielding an ordinary
modular space and a classical uniformity, we show that many fundamental properties
are \emph{lost} under this transition. In particular, notions of convergence,
Cauchy completeness, total boundedness, and compactness exhibit genuinely
directional behavior that cannot be recovered from the symmetrized modular or
from the associated enriched category.

The categorical point of view nevertheless plays an important organizing role.
Following Lawvere’s interpretation of (quasi-)metrics as categories enriched
over ordered monoidal bases \cite{Lawvere1973}, and Kelly’s general theory of
enriched categories and Cauchy completion \cite{Kelly1982}, we show that every
quasi-modular gauge determines an enriched category over a convolution quantale.
Within this setting, symmetrization appears as a lax idempotent change-of-base
construction, and the enriched Cauchy completion coincides with the classical
uniform completion of the symmetrized space. While this perspective unifies and
clarifies several constructions, it also highlights a key limitation: enriched
and uniform completions capture only the symmetric core of the theory, leaving
directional completeness and compactness invisible.

The main contribution of this paper is therefore conceptual rather than
foundational. We identify and isolate those properties of quasi-modular
pseudometric spaces that are intrinsically asymmetric, prove that they are not
invariants of symmetrization, and show that they refine the classical uniform
notions. In particular, we introduce directional convergence and completeness,
establish strict separations between forward, backward, and symmetric
completeness, and demonstrate that compactness of the symmetrized uniformity does
not imply any directional compactness. These results show that quasi-modular
pseudometric spaces form a genuinely richer category than their symmetric or
categorical shadows.

The paper is organized as follows. Section~\ref{sec:preliminaries} recalls the
classical modular setting and introduces the quasi-modular relaxation.
Section~\ref{sec:asymmetry} develops the associated forward and backward
quasi-uniformities and the induced bitopological structure. In
Section~\ref{sec:uniform-categorical} we relate quasi-modular gauges to enriched
categories and compare enriched and uniform completions. Finally,
Section~\ref{sec:directional-notion} is devoted to directional convergence,
completeness, and compactness, and to the failure of these properties to be
preserved under symmetrization.

\section{Preliminaries and Motivation}\label{sec:preliminaries}

We recall the standard (symmetric) modular setting after Chistyakov
\cite{Chistyakov2010,Chistyakov2015} and fix notation. We then state the
asymmetric relaxation that underlies the paper and record elementary
consequences needed later (monotonicity in the scale, regularizations,
Luxemburg-type gauges, and basic examples). Classical modulars in the sense of
Nakano-Orlicz-Musielak \cite{Musielak1983,RaoRen1991} and uniform/metric
basics \cite{Isbell1964,Kunzi2001,Matthews1992} serve as background.

\subsection{Modulars, pseudomodulars, and convexity}

Let $X$ be a nonempty set. A mapping
\[
w:(0,\infty)\times X\times X\longrightarrow [0,\infty],\qquad
(\lambda,x,y)\mapsto w_\lambda(x,y),
\]
is a \emph{(metric) modular} on $X$ if, for all $\lambda,\mu>0$ and $x,y,z\in X$:
\begin{enumerate}[label=(M\arabic*)]
\item\label{M1} $x=y$ iff $w_\lambda(x,y)=0$ for all $\lambda>0$ \emph{(separation)};
\item\label{M2} $w_\lambda(x,y)=w_\lambda(y,x)$ \emph{(symmetry)};
\item\label{M3} $w_{\lambda+\mu}(x,y)\le w_\lambda(x,z)+w_\mu(z,y)$ \emph{(modular triangle)}.
\end{enumerate}
If \ref{M1} is relaxed to $w_\lambda(x,x)=0$ (all $x,\lambda$), we speak of a
\emph{pseudomodular}. A modular (or pseudomodular) is called \emph{convex} if
\begin{equation}\label{eq:convex}
w_{\lambda+\mu}(x,y)\le \frac{\lambda}{\lambda+\mu}w_\lambda(x,y)
+\frac{\mu}{\lambda+\mu}w_\mu(x,y)
\qquad(\lambda,\mu>0).
\end{equation}
Equivalently, $\lambda\mapsto \lambda\,w_\lambda(x,y)$ is nonincreasing
\cite[Ch.~1]{Chistyakov2015}. For fixed $x,y\in X$, we write
\[
w^{x,y}:(0,\infty)\to[0,\infty], \qquad
w^{x,y}(\lambda):=w_\lambda(x,y),
\]
for the scale profile associated with the pair $(x,y)$. In particular, for $0<\lambda\le\mu$,
\begin{equation}\label{eq:scale-monotone}
w_\mu(x,y)\le \frac{\mu}{\lambda}w_\lambda(x,y)\le w_\lambda(x,y),
\end{equation}
and whenever $w^{x,y}\not\equiv 0$ one has
$\lim_{\lambda\downarrow 0}w_\lambda(x,y)=\infty$, while
$\lim_{\lambda\uparrow\infty}w_\lambda(x,y)=0$ if $w^{x,y}\not\equiv\infty$
\cite[1.2.3]{Chistyakov2015}.

\paragraph{Right/left regularizations.}
Define
\[
(w_{+0})_\lambda(x,y):=\lim_{\mu\downarrow\lambda}w_\mu(x,y),\qquad
(w_{-0})_\lambda(x,y):=\lim_{\mu\uparrow\lambda}w_\mu(x,y).
\]
If $w$ is a (pseudo)modular (strict/convex, resp.), then $w_{\pm0}$ is again a
(pseudo)modular with the same additional properties; moreover $w_{+0}$ is
right-continuous and $w_{-0}$ is left-continuous in $\lambda$
\cite[1.2.5]{Chistyakov2015}. In what follows we tacitly replace $w$ by a
right-continuous representative $w_{+0}$ when convenient.

\subsection{From classical modulars and metrics to metric modulars}

Two basic templates produce metric modulars:

\begin{example}[{\cite[1.3.1--1.3.2]{Chistyakov2015}}]\label{ex:scaled-metric}
Let $(X,d)$ be a (pseudo)metric space and $g:(0,\infty)\to[0,\infty]$.
Then $w_\lambda(x,y):=g(\lambda)\,d(x,y)$ is a (pseudo)modular iff $g$ is
nonincreasing; it is strict iff $g(\lambda)\ne0$ for all $\lambda>0$, and convex
iff $\lambda\mapsto\lambda g(\lambda)$ is nonincreasing. The canonical choice
$g(\lambda)=\lambda^{-1}$ yields $w_\lambda=d/\lambda$.
\end{example}

\begin{example}[{\cite[1.3.4--1.3.5]{Chistyakov2015}}]\label{ex:classical}
Let $X$ be a real linear space and $\rho:X\to[0,\infty]$ a (convex) classical
modular. Then
\[
w_\lambda(x,y):=\rho\!\left(\frac{x-y}{\lambda}\right)
\]
is a (convex) metric modular. Conversely, if $w$ is translation invariant and
positively homogeneous in the sense of \cite[1.3.5]{Chistyakov2015}, then
$w_\lambda(x,y)=\rho((x-y)/\lambda)$ with $\rho(\cdot)=w_1(\cdot,0)$.
\end{example}

\subsection{The modular topology and uniformity (symmetric case)}

Given a (pseudo)modular $w$ on $X$, set
\[
B_w(x,\lambda,\varepsilon):=\{y\in X:\ w_\lambda(x,y)<\varepsilon\}
\qquad(\lambda,\varepsilon>0).
\]
These form a neighborhood base of a topology $\tau(w)$ on $X$ (Hausdorff iff $w$
is strict) and generate a uniformity via entourages
$V(\lambda,\varepsilon):=\{(x,y):w_\lambda(x,y)<\varepsilon\}$; the induced
uniform topology coincides with $\tau(w)$. In the convex case $\tau(w)$ is
metrizable by a Luxemburg-type distance
\cite[Chs.~2--4]{Chistyakov2015}; compare \cite{Isbell1964}.

\subsection{Quasi-modular pseudometrics: the asymmetric relaxation}

Our target objects arise by removing symmetry \ref{M2} and keeping the modular
triangle and scale monotonicity.

\begin{definition}
A family $w=\{w_\lambda\}_{\lambda>0}$ with $w_\lambda:X\times X\to[0,\infty]$
is a \emph{quasi-modular pseudometric} if:
\begin{enumerate}[label=(QM\arabic*)]
\item $w_\lambda(x,x)=0$ for all $x\in X$, $\lambda>0$;
\item $w_{\lambda+\mu}(x,z)\le w_\lambda(x,y)+w_\mu(y,z)$ for all $x,y,z$ and $\lambda,\mu>0$;
\item $\lambda\mapsto w_\lambda(x,y)$ is right-continuous and nonincreasing.
\end{enumerate}
If, additionally, $w_\lambda(x,y)=0=w_\lambda(y,x)$ for all $\lambda>0$ implies
$x=y$, we say $w$ is \emph{$T_0$-separating}.
\end{definition}

\paragraph{Forward and backward structures.}
Associated with a quasi-modular pseudometric $w$ are the forward and backward
basic entourages
\[
V^+(\lambda,\varepsilon)=\{(x,y):w_\lambda(x,y)<\varepsilon\},\qquad
V^-(\lambda,\varepsilon)=\{(x,y):w_\lambda(y,x)<\varepsilon\},
\]
which generate, respectively, the forward and backward quasi-uniformities.

\begin{definition}\label{def:asym-cont}
A map $f:(X,w)\to(Y,v)$ is said to be \emph{forward uniformly continuous}
(resp.\ \emph{backward uniformly continuous}) if for every
$\lambda,\varepsilon>0$ there exist $\mu,\delta>0$ such that
\[
w_\mu(x,y)<\delta \;\Rightarrow\; v_\lambda(f(x),f(y))<\varepsilon
\]
(resp.\ $v_\lambda(f(y),f(x))<\varepsilon$).
\end{definition}

\paragraph{Symmetrization}
The \emph{symmetrized} family
\[
w^{\mathrm{sym}}_\lambda(x,y):=\max\{w_\lambda(x,y),\,w_\lambda(y,x)\}
\]
is a (symmetric) pseudomodular whenever $w$ is quasi-modular; this will control
relative compactness and total boundedness in
Section~\ref{sec:directional-notion}.

\subsection{Relation to existing asymmetric modular settings}

Several notions closely related to quasi-modular pseudometric spaces already
appear in the literature under different names and levels of generality.
Sebogodi \cite{Sebogodi2019} introduced \emph{quasi-modular} and
\emph{modular quasi-metric} spaces as asymmetric analogues of Chistyakov’s
modular metrics, focusing primarily on convergence of sequences and basic
topological properties. In parallel, L\'opez-Pastor, Pedraza, and
Rodr\'iguez-L\'opez \cite{LopezPastor2025} studied
\emph{quasi-pseudometric modular spaces} from an enriched categorical
perspective, identifying them as categories enriched over suitable quantales.

The present work is compatible with both points of view. Our Definition of
quasi-modular pseudometric coincides, up to notational conventions and mild
regularity assumptions in the scale parameter, with those used in
\cite{Sebogodi2019,LopezPastor2025}. However, our emphasis differs in two
respects. First, we systematically separate forward and backward structures
already at the level of basic neighborhoods and entourages, making the induced
bitopological and quasi-uniform structure explicit. Second, we focus on
properties such as directional completeness and compactness that are not
visible in either the symmetric modular theory or in the enriched categorical
formalism.

This positioning allows us to use known results where appropriate, while
isolating genuinely asymmetric notions that have not been previously
identified.

\subsection{Preparatory remarks for the asymmetric theory}

Before turning to the systematic study of asymmetric modular topologies, we
record two observations that will be used repeatedly in later sections.

\begin{remark}
\label{rem:directional-vs-sym}
For a quasi-modular pseudometric $w$, the symmetrized modular
$w^{\mathrm{sym}}$ controls only those properties that are invariant under
reversal of direction. In particular, convergence or boundedness with respect
to $w^{\mathrm{sym}}$ imposes simultaneous forward and backward control, but
does not distinguish between them. As a result, any property defined purely in
terms of $w^{\mathrm{sym}}$ or of the induced uniformity necessarily ignores
directional effects.
\end{remark}

\begin{remark}
Although the Luxemburg-type quasi-distance $d_w$ provides a convenient
numerical compression of the scale parameter, it will play a secondary role in
what follows. Our main arguments are carried out at the level of modular balls
and entourages, where directionality is most transparent. In particular,
directional convergence and completeness cannot, in general, be characterized
solely in terms of $d_w$ or its symmetrization.
\end{remark}

These remarks explain why the subsequent development proceeds in the language
of quasi-uniformities rather than in terms of a single quasi-metric.

\subsection{Transition to asymmetric modular topologies}

The preceding material establishes the basic objects and notation needed for
the remainder of the paper. In summary, quasi-modular pseudometric spaces
retain the scale-dependent triangle structure of modular metrics, while
allowing asymmetry and thereby giving rise to genuinely directional behavior.

In the next section we pass from pointwise modular inequalities to global
structure. We show that every quasi-modular pseudometric induces canonical
forward and backward quasi-uniformities, together with associated left and
right topologies. These structures provide the natural setting for directional
continuity, convergence, and completeness, and form the foundation for the
results developed in later sections.

\section{Asymmetric Modular Topologies}\label{sec:asymmetry}

Throughout this section, let $X$ be a nonempty set and let
\[
w : X \times X \times (0,\infty) \longrightarrow [0,1)
\]
be a \emph{(quasi--)modular gauge}. We assume that there exists a fixed continuous
$t$-conorm $\oplus$ on $[0,1]$ such that, for all $x,y,z \in X$ and $s,t>0$:

\begin{itemize}\itemsep.2em
\item[(W1)] $w(x,y,t)=0$ if and only if $x=y$, and $w(x,y,t)<1$ for all $t>0$;
\item[(W2)] symmetry $w(x,y,t)=w(y,x,t)$ is \emph{not} required;
\item[(W3)] $w(x,z,t+s)\le w(x,y,t)\oplus w(y,z,s)$ \hfill (modular triangle);
\item[(W4)] for each $x,y$, the map $t\mapsto w(x,y,t)$ is continuous and nonincreasing.
\end{itemize}

This setting includes RGV-fuzzy metrics and their classical realizations
(for instance, $\oplus=\max$ and $w(x,y,t)=d(x,y)/(t+d(x,y))$); see
\cite{Fletcher2014,Sostak2018}. We adopt the quasi-uniform language throughout,
following \cite[Chap.~XII.4]{PicadoPultr2011} and the survey \cite{Kunzi2001}.

\subsection{Forward and backward quasi-uniformities}

\begin{definition}\label{def:basic-entourages}
For $r\in(0,1)$ and $t>0$ define
\[
E_{r,t}^+ := \{(x,y)\in X\times X : w(x,y,t)<r\},
\qquad
E_{r,t}^- := \{(x,y)\in X\times X : w(y,x,t)<r\}.
\]
Let $\mathcal V^+(w)$ (resp.\ $\mathcal V^-(w)$) denote the family of all subsets
of $X\times X$ containing some $E_{r,t}^+$ (resp.\ $E_{r,t}^-$).
\end{definition}

\begin{theorem}\label{thm:QN-axioms}
The families $\mathcal V^+(w)$ and $\mathcal V^-(w)$ are quasi-uniformities on $X$.
Moreover, for every basic entourage $E_{r,t}^{\pm}$ there exist $r'\in(0,r)$ and
$t',t''>0$ such that
\[
E_{r',t'}^{\pm}\circ E_{r',t''}^{\pm} \subseteq E_{r,t}^{\pm}.
\]
\end{theorem}

\begin{proof}
We treat $\mathcal V^+(w)$; the backward case follows by symmetry of the arguments.

Conditions (QN1)–(QN3) of the Weil–Nachbin axioms are immediate from the definition.
For the small-composite property, continuity of the $t$-conorm $\oplus$ ensures the
existence of $r'\in(0,r)$ with $r'\oplus r'<r$. Choosing $t',t''>0$ with
$t'+t''\ge t$, the modular triangle inequality yields
\[
w(x,z,t'+t'')\le w(x,y,t')\oplus w(y,z,t'')<r'\oplus r'<r.
\]
Monotonicity in $t$ then gives $w(x,z,t)<r$, proving the claim.
See \cite{Kunzi2001,PicadoPultr2011} for details.
\end{proof}

\subsection{Induced bitopologies}

\begin{definition}\label{def:LR-topologies}
The \emph{forward} and \emph{backward} topologies induced by $w$ are defined by
\[
\begin{aligned}
\tau^+(w)
&=\{M\subseteq X : \forall x\in M\ \exists r,t\ (E_{r,t}^+[x]\subseteq M)\},\\
\tau^-(w)
&=\{M\subseteq X : \forall x\in M\ \exists r,t\ (E_{r,t}^-[x]\subseteq M)\}.
\end{aligned}
\]
Equivalently, $\tau^+(w)$ and $\tau^-(w)$ are generated by the basic balls
\[
B^+(x;r,t)=\{y:w(x,y,t)<r\},
\qquad
B^-(x;r,t)=\{y:w(y,x,t)<r\}.
\]
\end{definition}

Thus $(X,\tau^+(w),\tau^-(w))$ is a canonical bitopological space associated with
the quasi-modular gauge $w$ \cite{Fletcher2014,Kunzi2001}.

\subsection{Symmetrization and the join topology}

\begin{definition}\label{def:symmetrization}
Define the symmetrized gauge
\[
w^{\mathrm{sym}}(x,y,t):=w(x,y,t)\oplus w(y,x,t).
\]
Let $\mathcal U(w^{\mathrm{sym}})$ be the uniformity generated by the entourages
\[
E^{\mathrm{sym}}_{r,t}:=E_{r,t}^+\cap E_{r,t}^-,
\]
and write $\tau(w^{\mathrm{sym}})$ for the induced topology.
\end{definition}

\begin{proposition}\label{prop:join-topology}
The topology $\tau(w^{\mathrm{sym}})$ coincides with the join
$\tau^+(w)\vee\tau^-(w)$. If $w$ is symmetric, then
$\tau^+(w)=\tau^-(w)=\tau(w^{\mathrm{sym}})$.
\end{proposition}

\begin{proof}
Every basic $\tau(w^{\mathrm{sym}})$-neighborhood is an intersection of a forward
and a backward neighborhood, hence belongs to the join topology. Conversely,
any neighborhood in the join contains such an intersection. The symmetric case
is immediate. See \cite{Fletcher2014,PicadoPultr2011}.
\end{proof}

\subsection{Uniform continuity, completeness, and compactness}

\begin{definition}
A map $f:(X,w)\to(Y,v)$ is \emph{forward} (resp.\ \emph{backward}) uniformly
continuous if it is uniformly continuous with respect to $\mathcal V^+(w)$
(resp.\ $\mathcal V^-(w)$). It is \emph{bi-uniformly continuous} if it is both.
\end{definition}

\begin{lemma}
A map $f:X\to Y$ is bi-uniformly continuous if and only if it is uniformly
continuous as a map
\[
f:(X,\mathcal U(w^{\mathrm{sym}}))\longrightarrow(Y,\mathcal U(v^{\mathrm{sym}})).
\]
\end{lemma}

\begin{proof}
This follows directly from the fact that the basic symmetric entourages are
intersections of forward and backward entourages.
\end{proof}

We call $(X,w)$ \emph{bi-complete} if the uniform space
$(X,\mathcal U(w^{\mathrm{sym}}))$ is complete.

\begin{definition}
$(X,w)$ is \emph{forward} (resp.\ \emph{backward}) precompact if for every $r,t$
there exist finitely many points whose forward (resp.\ backward) $w$-balls cover
$X$.
\end{definition}

\begin{proposition}
If $(X,w)$ is forward and backward precompact, then
$(X,\mathcal U(w^{\mathrm{sym}}))$ is totally bounded. If, in addition, $(X,w)$
is bi-complete, then $\tau(w^{\mathrm{sym}})$ is compact.
\end{proposition}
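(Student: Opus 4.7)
My plan is to address the two claims in sequence: first total boundedness of $(X,\mathcal U(w^{\mathrm{sym}}))$, then compactness of $(X,\tau(w^{\mathrm{sym}}))$ via the classical theorem that a totally bounded and complete uniform space is compact in its uniform topology.

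For total boundedness, since $\{E^{\mathrm{sym}}_{r,t}\}$ is a basis for $\mathcal U(w^{\mathrm{sym}})$, it suffices to produce, for each $(r,t)$, a finite $F\subseteq X$ with $X = E^{\mathrm{sym}}_{r,t}[F]$. The first step is to use continuity of $\oplus$ at $0$ to pick $r'\in(0,r)$ with $r'\oplus r'<r$, together with $t'>0$ satisfying $2t'\ge t$ (for example $t'=t/2$); the small-composite refinement of Theorem~\ref{thm:QN-axioms} then yields $E^{\pm}_{r',t'}\circ E^{\pm}_{r',t'}\subseteq E^{\pm}_{r,t}$. Applying forward precompactness at $(r',t')$ produces centres $x_1,\dots,x_n$ that cover $X$ via forward balls $B^+(x_i;r',t')$, and backward precompactness produces $y_1,\dots,y_m$ covering $X$ via backward balls $B^-(y_j;r',t')$. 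The candidate finite set is assembled by pairing: for every pair $(i,j)$ with $A_{ij}:=B^+(x_i;r',t')\cap B^-(y_j;r',t')\ne\emptyset$, pick a representative $z_{ij}\in A_{ij}$ and set $F=\{z_{ij}\}$. To verify that $F$ witnesses total boundedness, given $y\in X$ one selects indices $i,j$ with $y\in A_{ij}$; then $y$ and $z_{ij}$ both lie in $A_{ij}$, so the four scale-$t'$ inequalities $w(x_i,y,t')<r'$, $w(y,y_j,t')<r'$, $w(x_i,z_{ij},t')<r'$ and $w(z_{ij},y_j,t')<r'$ hold. These are then to be combined via the modular triangle (W3) and the composition refinement to deliver $w(z_{ij},y,t)<r$ together with $w(y,z_{ij},t)<r$, i.e.\ $(z_{ij},y)\in E^{\mathrm{sym}}_{r,t}$.

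For the compactness claim, once total boundedness of $(X,\mathcal U(w^{\mathrm{sym}}))$ is in hand, Lemma~\ref{lem:bi-is-sym} ensures that bi-completeness is precisely completeness of $\mathcal U(w^{\mathrm{sym}})$; then the standard uniform-space result (see \cite[Ch.~XI]{PicadoPultr2011}) that a complete totally bounded uniform space is compact in its uniform topology gives compactness of $(X,\tau(w^{\mathrm{sym}}))$ at once.

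The main obstacle I anticipate is the triangle calculation in the verification step. Each of the natural two-term chains $z_{ij}\to y_j\to y$ and $z_{ij}\to x_i\to y$ has exactly one leg in a direction that the one-sided covers do not control: we are given $w(y,y_j,\cdot)$ but need $w(y_j,y,\cdot)$, and dually for $x_i$. Producing the two-sided bound therefore seems to require routing simultaneously through both anchors $x_i$ and $y_j$, so that the continuity of $\oplus$ at $0$ can absorb the accumulated $\oplus$-error at the target scale $t$; the delicate balance between the scales $(r',t')$ and $(r,t)$ is where the interplay of the forward and backward precompactness hypotheses is essential. If the direct chain resists a clean two-sided bound, a fall-back is to choose both precompactness covers at a strictly finer scale and iterate the composition refinement, thereby reducing the verification step to bookkeeping against a chain of small-composite inclusions.
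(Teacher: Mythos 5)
Your proposal is not a complete proof: the decisive step --- deducing $(z_{ij},y)\in E^{\mathrm{sym}}_{r,t}$ from the four scale-$t'$ estimates --- is precisely the step you leave open, and the obstacle you flag there is real and, in fact, fatal. From $y,z_{ij}\in B^+(x_i;r',t')\cap B^-(y_j;r',t')$ you control only $w(x_i,\cdot,t')$ and $w(\cdot,y_j,t')$; every directed chain from $z_{ij}$ to $y$ (or from $y$ to $z_{ij}$) must traverse at least one reversed leg of the form $w(\cdot,x_i,\cdot)$ or $w(y_j,\cdot,\cdot)$, and neither routing through both anchors nor passing to finer scales and iterating the small-composite law can help, because the precompactness hypotheses never bound those reversed legs at \emph{any} scale. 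For what it is worth, the paper's own proof closes this step only by silently writing $w(z_{ij},x_i,t_0)<s$ and $w(y_j,z_{ij},t_0)<s$ where only $w(x_i,z_{ij},t_0)<s$ and $w(z_{ij},y_j,t_0)<s$ are available --- that is, by an appeal to the very symmetry the paper sets out to drop.

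Indeed the first assertion of the proposition is false as stated, so no repair of this scheme can succeed without a stronger hypothesis. Take $X=[0,1]$ with the Sorgenfrey quasi-pseudometric $p(x,y)=y-x$ for $y\ge x$ and $p(x,y)=1$ otherwise, and $w(x,y,t)=p(x,y)/(t+p(x,y))$ with $\oplus=\max$ as in Example~\ref{ex:asym-gauge}. Forward balls are intervals $[x,x+\delta)$ and backward balls are intervals $(x-\delta,x]$, so $X$ is forward and backward precompact in the sense of Definition~\ref{def:precompact}; yet $p\vee p^{-1}$ is the discrete $\{0,1\}$-valued metric on an uncountable set, so $\mathcal U(w^{\mathrm{sym}})$ is nowhere near totally bounded. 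The hypothesis that actually works is total boundedness of $\mathcal V^{+}(w)$ in the strong sense of finite covers by sets $A$ with $A\times A\subseteq E^{+}_{r,t}$ (equivalent to the same condition for $\mathcal V^{-}(w)$ and for $\mathcal U(w^{\mathrm{sym}})$, since $A\times A\subseteq E^{+}_{r,t}$ forces $A\times A\subseteq E^{+}_{r,t}\cap E^{-}_{r,t}$); this is strictly stronger than the ball-cover precompactness assumed here. The second half of your argument (complete $+$ totally bounded $\Rightarrow$ compact for uniform spaces) is fine once total boundedness is actually secured, though note that bi-completeness equals completeness of $\mathcal U(w^{\mathrm{sym}})$ by definition, not by Lemma~\ref{lem:bi-is-sym}.
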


\begin{proof}
This is the standard uniform argument: forward and backward precompactness yield
finite covers by symmetric entourages, hence total boundedness; completeness then
implies compactness. See \cite{Fletcher2014,PicadoPultr2011}.
\end{proof}

\subsection{Examples}

\begin{example}
If $d$ is a metric and $w(x,y,t)=d(x,y)/(t+d(x,y))$, then $\tau(w^{\mathrm{sym}})$
is precisely the metric topology.
\end{example}

\begin{example}
If $w$ arises from a quasi-pseudometric $p$ via
$w(x,y,t)=p(x,y)/(t+p(x,y))$, then $\tau^+(w)$ and $\tau^-(w)$ are the forward and
backward $p$-topologies, while $\tau(w^{\mathrm{sym}})$ is the topology induced
by $p\vee p^{-1}$.
\end{example}

\section{Categorical and Uniform Perspectives}\label{sec:uniform-categorical}

This section places quasi-modular gauges within two complementary settings:
enriched category theory and classical uniform completion theory. The purpose is
not to introduce new categorical structures, but to clarify how the asymmetric
objects studied in Section~\ref{sec:asymmetry} admit a concise functorial
description, and to explain precisely which aspects of the theory are captured
and which are lost by symmetrization and uniform completion.

Our categorical point of departure is Lawvere’s interpretation of (pseudo)metrics
as categories enriched over the quantale
\(\mathsf V=([0,\infty],\ge,+,0)\) \cite{Lawvere1973}, together with Kelly’s general
theory of enriched categories, Kan extensions, and Cauchy completion
\cite{Kelly1982}. On the uniform side we rely on Isbell’s entourage and completion
theory \cite{Isbell1964}. The relationship between these points of view is classical
and well understood; see, for instance, the $(T,\mathsf V)$-framework of
Clementino-Tholen \cite{ClementinoTholen2003}.

The relevance of this perspective to quasi-modular settings was clarified
systematically by L\'opez-Pastor, Pedraza, and Rodr\'iguez-L\'opez
\cite{LopezPastor2025}, who showed that quasi-pseudometric modular spaces can be
identified with categories enriched over a suitable value quantale of scale
profiles, and that the topology generated by modular entourages coincides with
the topology induced by the corresponding enriched category. Our contribution
here is not to reprove these identifications, but to use them as a comparison
setting in order to isolate genuinely directional notions that disappear
under symmetrization and enriched or uniform completion.

\subsection{A quantalic base for quasi-modular gauges}

Fix a continuous $t$-conorm $\oplus$ on $[0,1]$. Let $\mathsf W$ denote the complete
lattice of all right-continuous, nonincreasing functions
\(\varphi:(0,\infty)\to[0,1]\), ordered pointwise decreasingly
(\(\varphi\preceq\psi\) iff \(\varphi(t)\ge\psi(t)\) for all $t$). Define a monoidal
product on $\mathsf W$ by convolution:
\[
(\varphi\star\psi)(u):=\inf_{u=t+s}\bigl(\varphi(t)\oplus\psi(s)\bigr),
\qquad
\mathbf 0(u):=0 .
\]
Then $(\mathsf W,\preceq,\star,\mathbf 0)$ is a commutative unital quantale. This is
the standard convolution quantale encoding triangle inequalities via composition;
compare \cite[§1]{Lawvere1973} and \cite[Ch.~1]{Kelly1982}. Closely related
quantales of isotone scale profiles were introduced in \cite{LopezPastor2025} in
the context of quasi-pseudometric modular spaces.

Let $w$ be a quasi-modular gauge on $X$ satisfying assumptions
(W1)-(W4) of Section~\ref{sec:asymmetry}. For $x,y\in X$ define the profile
\[
W(x,y)(t):=w(x,y,t)\qquad(t>0).
\]
Then the modular triangle inequality and monotonicity translate exactly into the
$\mathsf W$-enriched axioms
\[
W(x,z)\preceq W(x,y)\star W(y,z),
\qquad
\mathbf 0\preceq W(x,x).
\]

\begin{proposition}\label{prop:W-cat}
Every quasi-modular gauge $w$ determines a small $\mathsf W$-enriched category
$X_w$ with object set $X$ and hom-objects $W(x,y)\in\mathsf W$. A map
$f:X\to Y$ is forward (resp.\ backward) uniformly continuous if and only if it is
$\mathsf W$-functorial with respect to the forward (resp.\ backward) hom-profiles.
If $w$ is symmetric, bi-uniform continuity coincides with
$\mathsf W$-functoriality.
\end{proposition}

\begin{proof}
This is a direct translation of Definitions~\ref{def:basic-entourages} and
\ref{def:asym-cont} into the enriched language. The enriched order
$W_X(x,y)\preceq W_Y(fx,fy)$ expresses precisely the basic entourage condition for
uniform continuity. Full details follow standard enriched arguments
\cite[Chs.~1-2]{Kelly1982}.
\end{proof}

\paragraph{Symmetrization as change of base.}
The assignment
\[
W^{\mathrm{sym}}(x,y):=W(x,y)\star W(y,x)
\]
defines a symmetric $\mathsf W$-category whose basic balls coincide with the
two-sided balls $B(x;r,t)$ of Definition~\ref{def:symmetrization}. Categorically,
this is a lax idempotent change-of-base construction, mirroring the topological
identity $\tau(w^{\mathrm{sym}})=\tau^+(w)\vee\tau^-(w)$
(Proposition~\ref{prop:join-topology}). It extracts the largest symmetric core
compatible with the asymmetric theory, in agreement with the enriched
symmetrization mechanisms discussed in \cite{LopezPastor2025}.

\subsection{Kan extensions and extremal envelopes}

For a closed monoidal base $\mathsf V$, pointwise Kan extensions of
$\mathsf V$-functors exist under mild completeness assumptions and admit the
usual co/end formulas \cite[Ch.~4]{Kelly1982}. In the metric base
$\mathsf V=[0,\infty]$, these reduce to the classical McShane-Whitney extremal
extensions of Lipschitz maps:
\[
\overline\phi(x)=\inf_{a\in A}\bigl(\phi(a)+L\,d(x,ia)\bigr),
\qquad
\underline\phi(x)=\sup_{a\in A}\bigl(\phi(a)-L\,d(ia,x)\bigr).
\]
Via Proposition~\ref{prop:W-cat}, the same formulas apply to symmetric
quasi-modular gauges, and after symmetrization to the bi-uniform setting of
$\mathcal U(w^{\mathrm{sym}})$. No essentially new notion arise here; the value
of the enriched perspective lies in the uniform treatment of extension problems.

\subsection{Cauchy completion: enriched versus uniform}\label{ssec:enr-completion}

For symmetric bases $\mathsf V$, every small $\mathsf V$-category admits a Cauchy
completion obtained by splitting $\mathsf V$-Cauchy modules; this completion is
characterized by the universal property of extending absolute colimits and by the
Yoneda embedding with dense image \cite[§3-§4]{Kelly1982}. In the metric case this
recovers the usual metric completion.

On the uniform side, every uniform space admits a completion in the sense of
Isbell, unique up to uniform isomorphism, and compactness is equivalent to
completeness plus total boundedness \cite[Ch.~II]{Isbell1964}.

\begin{theorem}\label{thm:completion-compare}
Let $w$ be a quasi-modular gauge on $X$. The Cauchy completion of the symmetric
$\mathsf W$-category $X_{w^{\mathrm{sym}}}$ coincides, up to unique bi-uniform
isomorphism, with the uniform completion of the symmetrized uniform space
$\bigl(X,\mathcal U(w^{\mathrm{sym}})\bigr)$.
\end{theorem}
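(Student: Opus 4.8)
\emph{Proof plan.}
Both sides of the claimed coincidence are completions pinned down by universal properties: the $\mathsf W$-Cauchy completion $\widehat{X_{w^{\mathrm{sym}}}}$ is obtained by splitting $\mathsf W$-Cauchy modules and carries a dense, $\mathsf W$-fully faithful Yoneda embedding $y\colon X_{w^{\mathrm{sym}}}\to\widehat{X_{w^{\mathrm{sym}}}}$ (see \S\ref{ssec:enr-completion} and \cite{Kelly1982}), while the uniform completion $(\widehat X,\widehat{\mathcal U})$ carries the usual dense embedding $j\colon X\to\widehat X$ with its extension property \cite[Ch.~II]{Isbell1964}. The plan is to build one explicit bijection between their underlying sets, check that it is an isomorphism for the induced (symmetrized) uniformities and that it intertwines $y$ with $j$, and then invoke the uniqueness parts of the two universal properties to identify it as \emph{the} canonical comparison. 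By Lemma~\ref{lem:bi-is-sym} a ``bi-uniform isomorphism'' here is precisely a uniform isomorphism of the symmetrized uniform structures, and, as recorded after that lemma, $\mathcal U(w^{\mathrm{sym}})$ has a countable base of entourages (rational $r,t$) and is hence pseudometrisable; this lets the module analysis be run with sequences and lets us cross-check against Lawvere's identification \cite{Lawvere1973} of the Cauchy completion with the metric completion on the base $[0,\infty]$.

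\emph{Step 1 (underlying uniformity) and Step 2 (modules versus minimal Cauchy filters).}
For $r\in(0,1)$, $t>0$ the entourage $E^{\mathrm{sym}}_{r,t}=\{(x,y):W^{\mathrm{sym}}(x,y)(t)<r\}$ of Definition~\ref{def:symmetrization} is exactly the $(r,t)$-sublevel set of the enriched hom of $X_{w^{\mathrm{sym}}}$, and by Theorem~\ref{thm:QN-axioms} and Proposition~\ref{prop:join-topology} these form a base of $\mathcal U(w^{\mathrm{sym}})$; so $\mathcal U(w^{\mathrm{sym}})$ is the uniformity canonically attached to the (symmetric) $\mathsf W$-category $X_{w^{\mathrm{sym}}}$, and likewise $\widehat{\mathcal U}$ is the one attached to $\widehat{X_{w^{\mathrm{sym}}}}$. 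The heart is then the dictionary between modules and filters. To a left $\mathsf W$-module $\phi$ on $X_{w^{\mathrm{sym}}}$ assign the filter $\mathcal F_\phi$ on $X$ generated by the small sets $S_{r,t}(\phi):=\{x:\phi(x)(t)<r\}$; to a filter $\mathcal F$ assign the profile-valued map $\phi_{\mathcal F}$ with $\phi_{\mathcal F}(x)(t):=\inf\{r\in(0,1):B(x;r,t)\in\mathcal F\}$, which is right-continuous and nonincreasing in $t$ by (W3)--(W4). Adapting the metric computation, but with the convolution tensor built from the $t$-conorm $\oplus$ rather than $+$ --- the available tools being continuity and monotonicity of $\oplus$, in particular the existence for each $r$ of some $r'$ with $r'\oplus r'<r$ --- one verifies that $\phi_{\mathcal F}$ is a genuine $\mathsf W$-presheaf, that it is a \emph{Cauchy} module (admits a right-adjoint module, equivalently represents an absolute colimit) exactly when $\mathcal F$ is a round minimal Cauchy filter of $\mathcal U(w^{\mathrm{sym}})$, and that $\phi\mapsto\mathcal F_\phi$ and $\mathcal F\mapsto\phi_{\mathcal F}$ are mutually inverse modulo the canonical identification of a module with its roundification. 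Unwound through the co/end formulas, the adjointness condition says that each $S_{r,t}(\phi)$ is nonempty and the induced right module satisfies the counit inequality --- that is, $\mathcal F_\phi$ is Cauchy and minimal.

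\emph{Step 3 (transport and uniqueness).}
By Step 2 the carrier of $\widehat{X_{w^{\mathrm{sym}}}}$ (Cauchy modules up to equivalence) is in canonical bijection with the set of minimal Cauchy filters of $\mathcal U(w^{\mathrm{sym}})$, i.e.\ with the carrier of $(\widehat X,\widehat{\mathcal U})$, and under it $y$ corresponds to the principal-filter map, which is $j$. The $(r,t)$-sublevel sets of the homs of $\widehat{X_{w^{\mathrm{sym}}}}$ correspond to the canonical extensions $\widehat{E}^{\mathrm{sym}}_{r,t}$ of the entourages $E^{\mathrm{sym}}_{r,t}$ to the completion, and these form a base of $\widehat{\mathcal U}$; hence the bijection is a uniform isomorphism of the symmetrized structures intertwining the two dense embeddings, i.e.\ a bi-uniform isomorphism by Lemma~\ref{lem:bi-is-sym}. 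It is the unique such: $w$ being $T_0$-separating makes $\mathcal U(w^{\mathrm{sym}})$ separated, so a bi-uniform map out of $\widehat{X_{w^{\mathrm{sym}}}}$ is determined by its restriction to the dense copy of $X$, and therefore any two comparison isomorphisms fixing that copy agree. (Alternatively, the statement can be read off the $(T,\mathsf V)$-framework of \cite{ClementinoTholen2003}, in which the Cauchy completion of the $\mathsf W$-category underlying a uniform space is its uniform completion; Step 2 supplies exactly the identification needed to apply it.)

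\emph{Where the difficulty lies.}
The substantive content is Step 2, and within it the equivalence ``enriched adjointness of $\phi$ $\iff$ Cauchy-ness of $\mathcal F_\phi$'' --- flatness or roundness alone is not enough. The delicate points are: (a) redoing the triangle manipulations of the metric case with the convolution built from $\oplus$ rather than ordinary addition, where continuity and monotonicity of $\oplus$ are the only handles; (b) confirming that $\phi_{\mathcal F}$ actually lands in $\mathsf W$ (right-continuity and monotonicity in $t$, plus the $\oplus$-convolution triangle inequality), which rests on (W3)--(W4) and on the countable character supplied by metrisability, so that no ``extra'' Cauchy modules appear beyond those of the metric model; and (c) matching the topology of the enriched completion with that of the uniform completion, which reduces to the ball correspondence $\widehat{B}(\xi;r,t)\leftrightarrow\widehat{E}^{\mathrm{sym}}_{r,t}[\xi]$. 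Granting Step 2, Steps 1 and 3 are routine bookkeeping together with the two uniqueness-of-completion theorems.
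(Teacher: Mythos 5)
Your proof is correct in spirit but takes a genuinely different, more constructive route than the paper's. The paper argues purely at the level of universal properties: it asserts (in a single sentence, without proof) that ``a complete separated uniform space is the same datum as a complete separated symmetric $\mathsf W$--category for our base,'' and then invokes the uniqueness clauses of Isbell's and Kelly's completion theorems in both directions to obtain the comparison isomorphism $\Phi$ and its inverse $\Psi$. You instead make that identification \emph{explicit} by building a Lawvere-style dictionary between $\mathsf W$-Cauchy modules $\phi$ and minimal Cauchy filters of $\mathcal U(w^{\mathrm{sym}})$, via $S_{r,t}(\phi)=\{x:\phi(x)(t)<r\}$ in one direction and $\phi_{\mathcal F}(x)(t)=\inf\{r:B(x;r,t)\in\mathcal F\}$ in the other, with the universal-property bookkeeping relegated to Step~3. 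What your approach buys is precisely the content the paper sweeps under that one-sentence ``same datum'' claim: your Step~2 is exactly the verification needed to make the paper's abstract argument rigorous. What the paper's approach buys is brevity and abstraction — it avoids any triangle manipulations with the $\oplus$-convolution tensor at all, at the cost of leaving the equivalence of complete separated objects to the reader. You do honestly flag that the substantive work lives in the adjointness-$\iff$-Cauchy equivalence of Step~2 and that only a sketch is given; the paper has the mirror gap. A small caution: your appeal to metrizability (countable base of $\mathcal U(w^{\mathrm{sym}})$) is stated as if automatic, but it holds only after restricting to rational $r,t$, which requires the right-continuity in (W4) to pass between rational and irrational scales — worth a line if you expand Step~2, since it is what licenses working with sequences rather than general nets in the module analysis.
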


\begin{proof}
Both constructions satisfy the same universal property: dense embedding into a
complete separated object with unique extension of uniformly continuous (equivalently,
$\mathsf W$-functorial) maps. Identifying uniform continuity with enriched
functoriality via Proposition~\ref{prop:W-cat}, the two completions agree by
uniqueness. This is standard in the metric and uniform cases and carries over
verbatim to the present setting; see \cite{Kelly1982,Isbell1964}.
\end{proof}

\subsection{Quasi-uniform bicompletion}

The forward and backward entourages $E_{r,t}^{\pm}$ generate quasi-uniformities
$\mathcal V^{\pm}(w)$ whose common symmetrization is
$\mathcal U(w^{\mathrm{sym}})$. In categorical terms this corresponds to the
coreflection $W\mapsto W^{\mathrm{sym}}$.

\begin{proposition}\label{prop:bicomp}
Any bicompletion of $(X,\mathcal V^+(w),\mathcal V^-(w))$ induces a uniform
completion of $(X,\mathcal U(w^{\mathrm{sym}}))$.
\end{proposition}

\begin{proof}
Density for both quasi-uniformities implies density for the join topology.
Completeness of the bicompletion forces convergence of symmetric Cauchy filters,
which yields completeness of the induced uniformity. The universal property then
follows from the standard quasi-uniform to uniform transition
\cite{Kunzi2001,PicadoPultr2011}.
\end{proof}

\subsection{Consequences and summary}

Forward and backward precompactness imply total boundedness of
$\mathcal U(w^{\mathrm{sym}})$, and bi-completeness yields compactness of the
symmetrized topology by Isbell’s theorem. Function spaces and Yoneda embeddings
fit naturally into the enriched setting, but these constructions detect only
the symmetric core of the theory.

\medskip
\noindent\textit{Summary.}
Quasi-modular gauges admit a natural description as $\mathsf W$-enriched
categories over a convolution quantale, refining earlier enriched formulations
for quasi-pseudometric modular spaces \cite{LopezPastor2025}. Symmetrization
corresponds to a change of base yielding a symmetric enriched category whose
Cauchy completion agrees with the classical uniform completion. While this
categorical perspective provides a compact and functorial organisation of the
theory, it also makes precise the central theme of this paper: genuinely
directional notions of convergence and compactness are invisible to both
enriched and uniform completions and must be studied at the quasi-uniform level.

\section{Directional Convergence, Completeness, and Compactness}
\label{sec:directional-notion}

A central theme of this paper is that quasi-modular pseudometric spaces
carry genuinely directional information that is invisible at the level
of symmetrized modulars, uniformities, or enriched categorical models.
In this section we introduce directional notions of convergence,
completeness, and compactness, and show that they form strictly finer
invariants than their symmetric counterparts.

\subsection{Directional convergence}

\begin{definition}
Let $(X,w)$ be a quasi-modular pseudometric space and $(x_i)_{i\in I}$ a net
in $X$.
\begin{enumerate}[label=(\roman*)]
\item $(x_i)$ is \emph{forward convergent} to $x\in X$ if for every $r\in(0,1)$
and $t>0$ there exists $i_0\in I$ such that
\[
w(x_i,x,t)<r \qquad \text{for all } i\ge i_0.
\]
\item $(x_i)$ is \emph{backward convergent} to $x\in X$ if for every $r\in(0,1)$
and $t>0$ there exists $i_0\in I$ such that
\[
w(x,x_i,t)<r \qquad \text{for all } i\ge i_0.
\]
\end{enumerate}
\end{definition}

Forward and backward convergence correspond exactly to convergence in the
topologies $\tau^+(w)$ and $\tau^-(w)$, respectively. In general these notions
need not coincide, even when the symmetrized topology $\tau(w^{\mathrm{sym}})$
is Hausdorff or compact.

\subsection{Directional Cauchy nets and completeness}

\begin{definition}
A net $(x_i)$ in $(X,w)$ is called
\begin{enumerate}[label=(\roman*)]
\item \emph{forward Cauchy} if for every $r\in(0,1)$ and $t>0$ there exists
$i_0$ such that
\[
w(x_i,x_j,t)<r \qquad \text{for all } j\ge i\ge i_0;
\]
\item \emph{backward Cauchy} if for every $r\in(0,1)$ and $t>0$ there exists
$i_0$ such that
\[
w(x_j,x_i,t)<r \qquad \text{for all } j\ge i\ge i_0.
\]
\end{enumerate}
\end{definition}

\begin{definition}
A quasi-modular pseudometric space $(X,w)$ is said to be
\begin{enumerate}[label=(\roman*)]
\item \emph{forward complete} if every forward Cauchy net is forward convergent;
\item \emph{backward complete} if every backward Cauchy net is backward convergent;
\item \emph{bi-complete} if it is both forward and backward complete.
\end{enumerate}
\end{definition}

These notions refine uniform completeness of the symmetrized space
$(X,\mathcal U(w^{\mathrm{sym}}))$ and, in general, are strictly stronger.

\begin{theorem}\label{thm:dir-completeness-separation}
There exist quasi-modular pseudometric spaces that are forward complete but not
backward complete, and spaces that are backward complete but not forward complete.
\end{theorem}

\begin{proof}
Let $(X,p)$ be a quasi-metric space that is complete with respect to $p$ but not
with respect to the conjugate quasi-metric $p^{-1}$. Define
\[
w(x,y,t):=\frac{p(x,y)}{t+p(x,y)}.
\]
Then $w$ is a quasi-modular pseudometric. Forward Cauchy nets for $w$ coincide
with forward Cauchy nets for $p$, and convergence is preserved by monotonicity of
the gauge. Hence $(X,w)$ is forward complete, while backward completeness fails.
\end{proof}

\begin{proposition}
Directional completeness is not preserved under symmetrization.
\end{proposition}

\begin{proof}
In the above example the symmetrized modular $w^{\mathrm{sym}}$ induces a complete
uniform structure, while $(X,w)$ fails to be backward complete. Thus directional
completeness is not an invariant of $w^{\mathrm{sym}}$.
\end{proof}

\subsection{Directional total boundedness and compactness}

\begin{definition}
Let $(X,w)$ be a quasi-modular pseudometric space.
\begin{enumerate}[label=(\roman*)]
\item $X$ is \emph{forward totally bounded} if for every $r\in(0,1)$ and $t>0$
there exist finitely many points $x_1,\dots,x_n$ such that
\[
X=\bigcup_{k=1}^n B^+(x_k;r,t).
\]
\item $X$ is \emph{backward totally bounded} if the analogous condition holds
with backward balls.
\end{enumerate}
\end{definition}

\begin{definition}
A quasi-modular pseudometric space $(X,w)$ is \emph{forward compact}
(resp.\ \emph{backward compact}) if every net admits a forward
(resp.\ backward) convergent subnet.
\end{definition}

\begin{theorem}\label{thm:compactness-gap}
There exist quasi-modular pseudometric spaces whose symmetrized uniformities are
compact, but which are neither forward nor backward compact.
\end{theorem}

\begin{proof}
Let $(X,p)$ be a quasi-metric space such that the metric $p\vee p^{-1}$ is compact,
while $p$ is not totally bounded in either direction. Defining $w$ as above, the
symmetrized uniformity $\mathcal U(w^{\mathrm{sym}})$ is compact, whereas
directional total boundedness and directional compactness both fail.
\end{proof}

\begin{remark}
Directional compactness is therefore a strictly finer invariant than compactness
of the symmetrized topology or uniformity.
\end{remark}

\subsection{Failure of symmetrization invariance}

\begin{theorem}
Forward completeness, backward completeness, directional total boundedness, and
directional compactness are not invariants under symmetrization.
\end{theorem}

\begin{proof}
Each failure follows directly from the constructions in
Theorems~\ref{thm:dir-completeness-separation} and~\ref{thm:compactness-gap}.
\end{proof}

\begin{remark}
These results show that quasi-modular pseudometric spaces possess intrinsically
asymmetric properties that cannot be recovered from their symmetrized uniformities
or from enriched categorical representations. This justifies the study of
directional notions at the quasi-uniform level.
\end{remark}

\bibliographystyle{elsarticle-num}

\begin{thebibliography}{99}

\bibitem{Chistyakov2010}
V.V.~Chistyakov,
\emph{Modular metric spaces I: basic concepts},
Nonlinear Anal.~\textbf{72} (2010), 1–14.

\bibitem{Chistyakov2015}
V.V.~Chistyakov,
\emph{Metric Modular Spaces: Theory and Applications},
Springer, 2015.

\bibitem{ClementinoTholen2003}
M.M.~Clementino, W.~Tholen,
\emph{Metric, topology and multicategory—a common approach},
J.~Pure Appl.~Algebra~\textbf{179} (2003), 13–47.

\bibitem{Fletcher2014}
P.~Fletcher, W.~Lindgren,
\emph{Quasi-Uniform Spaces},
Marcel Dekker, New York, 2014.

\bibitem{GregoriRomaguera2000}
V.~Gregori, S.~Romaguera,
\emph{Some properties of fuzzy metric spaces},
Fuzzy Sets Syst.~\textbf{115} (2000), 485–489.

\bibitem{HANCHEOLSEN2010385}
H.~Hanche-Olsen, H.~Holden,
\emph{The Kolmogorov–Riesz compactness theorem},
Expo.~Math.~\textbf{28} (2010), 385–394.

\bibitem{HarjulehtoHasto}
P.~Harjulehto, P.~Hästö,
\emph{Orlicz and Generalized Orlicz Spaces},
Springer, 2019.

\bibitem{Isbell1964}
J.R.~Isbell,
\emph{Uniform Spaces},
AMS, 1964.

\bibitem{Kelly1982}
G.M.~Kelly,
\emph{Basic Concepts of Enriched Category Theory},
Cambridge Univ.~Press, 1982.

\bibitem{Kunzi2001}
H.-P.~A.~K\"unzi,
\newblock Nonsymmetric distances and their associated topologies: About the origins of basic ideas in the area of asymmetric topology,
\newblock in \emph{Handbook of the History of General Topology}, vol.~3,
eds.~C.~E.~Aull and R.~Lowen,
Kluwer Academic Publishers, Dordrecht, 2001, pp.~853--968.

\bibitem{Lawvere1973}
F.W.~Lawvere,
\emph{Metric spaces, generalized logic, and closed categories},
Rend.~Sem.~Mat.~Fis.~Milano~\textbf{43} (1973), 135–166.

\bibitem{LopezPastor2025}
C.~L\'opez-Pastor, T.~Pedraza, J.~Rodr\'iguez-L\'opez,
{\it Quasi-pseudometric modular spaces as Q-categories},
Filomat {\bf 39} (19) (2025), 6693--6710.


\bibitem{Matthews1992}
S.G.~Matthews,
\emph{Partial metric topology},
Ann.~New York Acad.~Sci.~\textbf{728} (1992), 183–197.

\bibitem{MushaandjaOlela2025}
Z.~Mushaandja, O.~Olela-Otafudu,
\emph{On the modular metric topology},
Topology Appl.~\textbf{372} (2025), 109224.

\bibitem{Musielak1983}
J.~Musielak,
\emph{Orlicz Spaces and Modular Spaces},
Springer, 1983.

\bibitem{PicadoPultr2011}
J.~Picado, A.~Pultr,
\emph{Frames and Locales: topology without points},
Springer Science \& Business Media, 2011.

\bibitem{RaoRen1991}
M.M.~Rao, Z.D.~Ren,
\emph{Theory of Orlicz Spaces},
Marcel Dekker, New York, 1991.

\bibitem{Sebogodi2019}
K.~Sebogodi,
\emph{Some Topological Aspects of Modular Quasi-Metric Spaces},
Ph.D. Thesis, University of the Witwatersrand,
Johannesburg, South Africa, 2019.

\bibitem{Sostak2018}
A.~Sostak,
\emph{George–Veeramani fuzzy metrics revised},
Axioms~\textbf{7} (2018), 60.

\end{thebibliography}

\end{document}